
\documentclass{amsart}


\usepackage{graphicx}   
\usepackage{amssymb}    

\usepackage{prettyref}
\newrefformat{rem}{Remark \ref{#1}}
\newrefformat{cor}{Corollary \ref{#1}}
\newrefformat{fig}{Figure \ref{#1}} 


\newcommand{\nats}{\mathbb{N}}

\newcommand{\rsphere}{\widehat{\mathbb{C}}}



\newcommand{\set}[1]{\left\{#1\right\}}

\newtheorem{thm}{Theorem}
\newtheorem{lem}[thm]{Lemma} 
\newtheorem{cor}[thm]{Corollary} 

\theoremstyle{definition} 

\newtheorem{defn}[thm]{Definition} 
 
\newtheorem{rem}[thm]{Remark} 
\newtheorem{ques}[thm]{Question} 
\newtheorem*{term}{Terminology} 
\newtheorem*{ack}{Acknowledgements}





\begin{document}

\author[C. P.~Curry]{Clinton P. Curry} 

\title[Irreducible Julia Sets]{Irreducible Julia Sets of Rational
  Functions}

\address[Clinton P.~Curry] %
{Department of Mathematics\\
  University of Alabama at Birmingham\\
  Birmingham, AL 35294-1170} \email{clintonc@uab.edu}
\urladdr{http://www.math.uab.edu/~curry} \date{\today}

\thanks{The author was supported in part by NSF-DMS-0353825.}


\subjclass[2000]{Primary 37F10, Secondary 54F15}

\begin{abstract}
  We prove that a polynomial Julia set which is a finitely irreducible
  continuum is either an arc or an indecomposable continuum.  For the
  more general case of rational functions, we give a topological model
  for the dynamics when the Julia set is an irreducible continuum and
  all indecomposable subcontinua have empty interior.
\end{abstract}
\maketitle
\thispagestyle{empty}


\section{Introduction}
\label{sec:introduction}

Let $\rsphere$ denote the Riemann sphere, and let $R:\rsphere
\rightarrow \rsphere$ be a rational function.  The \emph{Fatou set}
$F(R)$ is the domain of normality of the iterates $\set{R^i \, \mid \, i
\in \nats}$.  The \emph{Julia set} $J(R)$ is $\rsphere \setminus
F(R)$, which is generally regarded as the subset of $\rsphere$ where
$R$ is chaotic.  If the degree of $R$ is at least two, then $J(R)$ is
a non-empty, compact, perfect subset of $\rsphere$.  A
\emph{continuum} is a non-empty, compact, and connected metric space.
A connected Julia set of a rational function is therefore a
subcontinuum of $\rsphere$.

This paper is motivated by recent work that addressed a conjecture of
P. M.~Ma\-ki\-en\-ko.  The exact statement is not important to the
subject of this paper; interested parties are referred to
\cite{Curry:ul} for more information.  What is important is the
conclusion: We proved that, if $R$ is a rational function that is a
counterexample to Makienko's conjecture, its Julia set $J(R)$ is a
\emph{finitely irreducible continuum}, and hence an
\emph{indecomposable continuum}.

\begin{defn}[Irreducible]
  A continuum $X$ is \emph{irreducible about a set $A \subset X$} if
  no proper subcontinuum of $X$ contains $A$.  If $X$ is irreducible
  about a finite subset, it is called \emph{finitely irreducible}.  If
  $X$ is irreducible about a two-point subset, it is simply called
  \emph{irreducible}.
\end{defn}
\begin{rem}\label{rem:lc_irred}
  The unit interval $[0,1]$ is an irreducible continuum, since no
  proper subcontinuum contains both of its endpoints. For the same
  reason, a finite tree is a finitely irreducible continuum.
  Conversely, any locally connected finitely irreducible continuum is a
  finite tree, since locally connected continua are also arcwise
  connected.  The arc is the only locally connected irreducible
  continuum.
\end{rem}
\begin{figure}
  \centering
  \includegraphics[height=4cm]{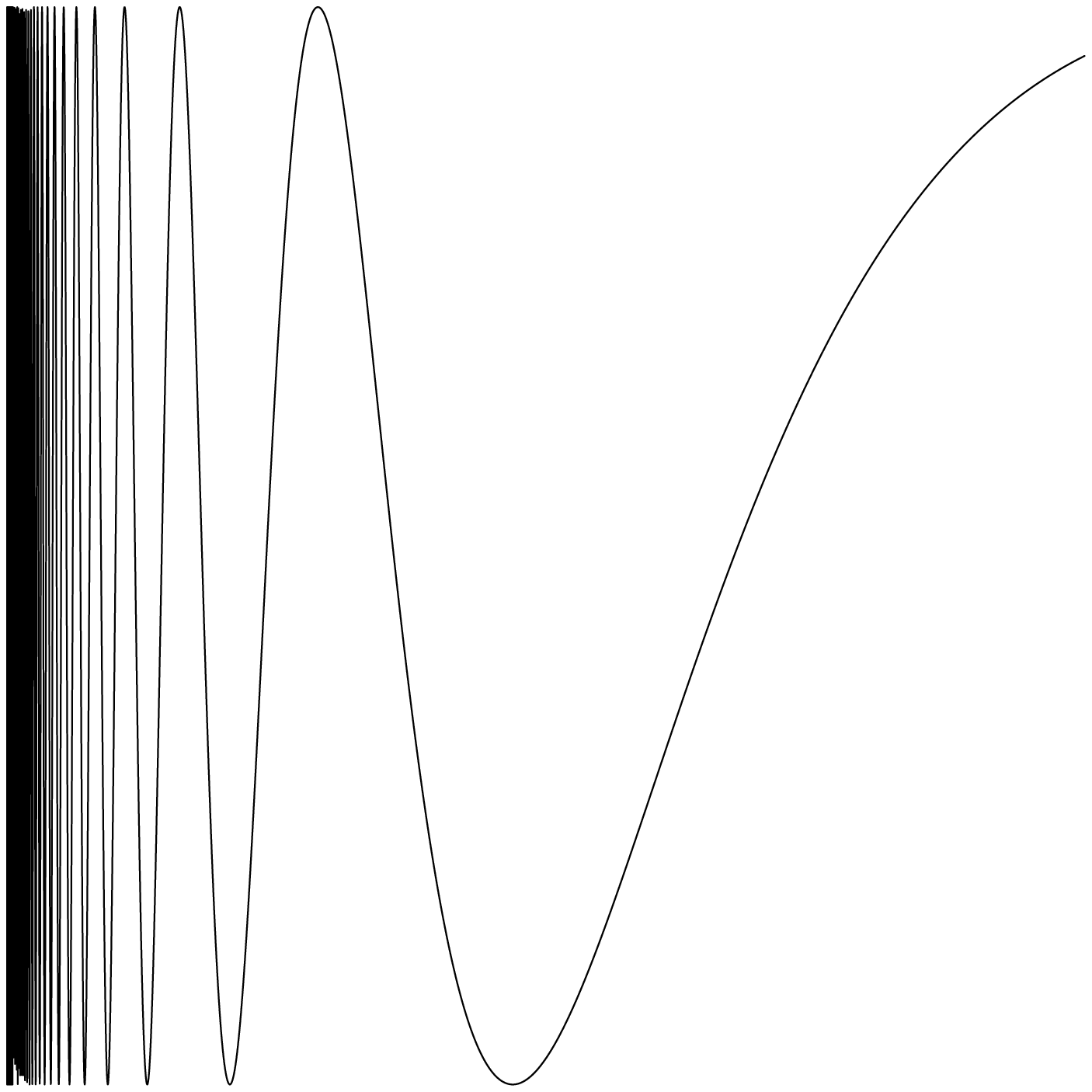}
  \hspace{1cm}
  \includegraphics[height=4cm]{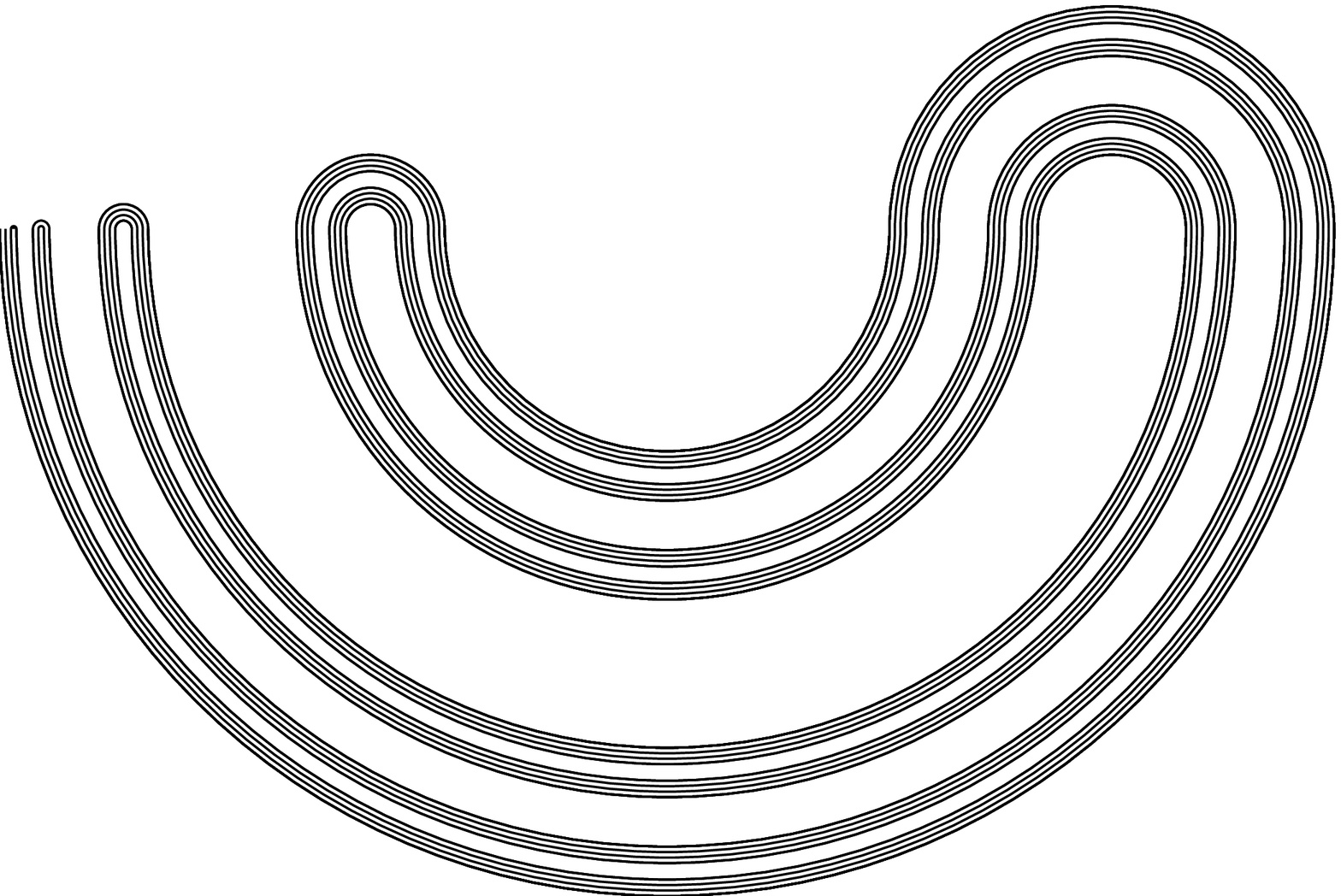}
  \caption{On the left is the $\sin \frac1x$ continuum, the simplest
    non-locally connected irreducible continuum.  On the right is the
    Knaster continuum, the simplest indecomposable continuum.}
  \label{fig:sin1x}
\end{figure}

It is well-known that there are Julia sets of rational functions which
are irreducible continua.  For example, the Julia set of the
polynomial $z \mapsto z^2-2$ is the interval $[-2,2]$.  Though it is
not known if every Julia set which is an irreducible continuum is an
arc, there are far more complicated examples of irreducible continua.

\begin{defn}[Indecomposable]
  A continuum $X$ is \emph{decomposable} if there are proper
  subcontinua $A$ and $B$ such that $A \cup B = X$.  Otherwise, $X$ is
  \emph{indecomposable}.
\end{defn}

\begin{rem}\label{rem:indec}
  Indecomposable continua are also irreducible continua, even strongly
  so.  If $X$ is an indecomposable continuum, then for a residual set
  of pairs $(x,y) \in X \times X$, $X$ is irreducible between $x$ and
  $y$.
\end{rem}

The arc and indecomposable continua represent the extremes of
topological complexity for irreducible continua.  The goal of this
paper is to prove that these two cases are representative for rational
Julia sets which are finitely irreducible continua.  We prove in
\prettyref{sec:polynomial-case} that if $P$ is a polynomial such that
$J(P)$ is an irreducible continuum, then $J(P)$ is either an arc or an
indecomposable continuum.  We conjecture the same result holds for
rational functions, but we only prove a weaker structure theorem for
rational functions using the same tools (see
\prettyref{sec:invariance}).

The argument naturally divides into two cases.  The first case is when
the Julia set in question is an irreducible continuum which has an
indecomposable subcontinuum with non-empty interior relative to the
Julia set.  Not much is said about the Julia set in this case, either
topologically or dynamically.  The other case is that the Julia set
contains no indecomposable subcontinuum with interior, which is the
case that most of the statements in this paper address.  To avoid
awkward repetition, we introduce the following terminology.

\begin{term}
  A continuum which is finitely irreducible such that no
  indecomposable subcontinuum has interior will be called a
  \emph{finished continuum}.
\end{term}

\begin{ack}
  I would like to express my appreciation for what Professor Devaney
  has done for the field and for the reserachers who work in it.  I
  would also like to thank my doctoral advisors, Dr. Alexander Blokh
  and Dr. John C.~Mayer, for helpful conversations on this and other
  topics.
\end{ack}

\section{Aposyndesis and Vought's Decomposition}
\label{sec:aposyndesis}

In this section, we review the notion of \emph{aposyndesis} due to
Jones \cite{Jones:1941db, Jones:1952nx} and an associated
decomposition defined by Vought \cite{Vought:1974qy}.  The goal is to
define a monotone map $m:J(R) \rightarrow Y$, where $Y$ is a
topologically simpler space.  In our case, it will happen that $Y$ is
locally connected, and studying $Y$ will help us gain insight about
$J(R)$.

\begin{defn}[Aposyndesis]
  A continuum $X$ is aposyndetic at a point $p \in X$ if every point
  $q \in X \setminus \set{p}$ is contained in the interior (relative to
  $X$) of a continuum $Y \subset X \setminus \set{p}$.
\end{defn}
Aposyndesis is a topological property which is slightly weaker than
local connectivity.  It is not difficult to see that any locally
connected continuum is aposyndetic.  Conversely, any aposyndetic
continuum which is the boundary of a simply connected domain in
$\rsphere$ is locally connected \cite[Theorem 14]{Whyburn:1939zr}.
Since the Julia set of a polynomial is the boundary of the domain of
attraction of $\infty$, an aposyndetic polynomial Julia set is also
locally connected.  A simple example of a non-locally connected
aposyndetic continuum is illustrated in \prettyref{fig:aposyndetic}.

Associated to the concept of aposyndesis is the set-valued function
$T$, defined below.  It should be considered in analogy with a
topological closure operator, except that it is not necessarily
idempotent.

\begin{figure}
  \centering
  \includegraphics[width=5cm]{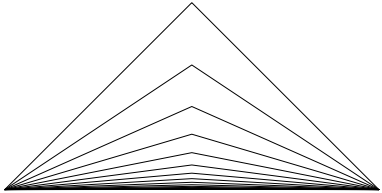}
  \caption{The suspension over a convergent sequence.  This continuum
    is aposyndetic, but not locally connected.}
  \label{fig:aposyndetic}
\end{figure}
\begin{defn}[The set-valued function $T$]
  For a non-empty set $A \subset X$, let $T(A)$ be the set of $x \in
  X$ for which all subcontinua of $X \setminus A$ containing $x$ have
  empty interior relative to $X$.  A set $A$ is called
  $\emph{$T$-closed}$ if $T(A) = A$.
\end{defn}
\begin{rem}\label{rem:T_properties}
  It follows immediately from the definition that, for any sets $A
  \subset B$, $A \subset T(A) \subset T(B)$.  If $X$ is the $\sin
  \frac 1 x$ continuum depicted in \prettyref{fig:sin1x} and $A$ is a subset
  of the limit bar, that $T(A)$ equals the entire limit bar.  Further,
  because every proper subcontinuum of an indecomposable continuum is
  nowhere dense, $T(A) = X$ whenever $X$ is an indecomposable
  continuum and $A$ is a subset.
\end{rem}

Vought showed that the set-valued function $T$ can be used to define a
continuous monotone map from any finished continuum $X$ onto a finite
tree $Y$.  He does so giving a partition $\mathcal D(X)$ of $X$ into
subcontinua. The space $Y$ is defined as the quotient space $X /
\mathcal D(X)$.  In general, $m$ is monotone when each element of
$\mathcal D(X)$ is connected, and $m$ is continuous when the
collection $\mathcal D(X)$ is upper semi-continuous.  The reader is
referred to \cite[Chapter 3]{Nadler:1992ek} for these and other facts
about decompositions of continua and continuous maps.

The following theorem describes the map defined by Vought.
\begin{thm}[Theorem 1 of \cite{Vought:1974qy}]\label{thm:decomp}
  Let $M$ be a finished continuum which is irreducible about $n$
  points, but no fewer.  Let
  \[\mathcal D(M) = \set{ T^n\{x\} : x \in M }\]
  where $T^n$ denotes the $n$-fold composition of $T$. Then the
  following hold:
  \begin{enumerate}
  \item $\mathcal D(M)$ is an upper semi-continuous decomposition of $M$,
  \item the elements of $\mathcal D(M)$ are continua,
  \item the quotient space $M / \mathcal D(M)$ is locally connected, and
  \item each element of $\mathcal D(M)$ has no interior in $M$.
  \end{enumerate}
  Further, $\mathcal D(M)$ is the only partition satisfying these
  properties.  Also, the quotient $M / \mathcal{D}(M)$ is a tree with
  $n$ endpoints.
\end{thm}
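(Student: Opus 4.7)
The plan is to follow the standard decomposition-theoretic approach, establishing basic properties of $T$ and then iterating to build the partition $\mathcal{D}(M)$.  First I would verify that $T$ is monotone and that $T(A)$ is closed in $M$ for every non-empty $A$: if $x \notin T(A)$ then some subcontinuum $K \subset M \setminus A$ satisfies $x \in \interior_M(K)$, and every point of $\interior_M(K)$ likewise lies outside $T(A)$, so the complement of $T(A)$ is open.  The crucial lemma would be that, in a \emph{finished} continuum, $T$ carries continua to continua.  I would prove this by contradiction: if $T(A) = P \sqcup Q$ is a separation with the connected set $A$ contained in $P$, then a boundary-bumping argument would produce a subcontinuum $K \subset M \setminus A$ meeting $Q$ with non-empty $M$-interior; the finished hypothesis forbids $K$ from being indecomposable with interior, so decomposing $K$ into proper subcontinua would yield a subcontinuum through a point of $Q$ with non-empty interior, contradicting $Q \subset T(A)$.

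Next I would show the ascending chain $\{x\} \subset T\{x\} \subset T^2\{x\} \subset \cdots$ stabilizes within $n$ steps.  If $T^{k+1}\{x\} \supsetneq T^{k}\{x\}$ for $k = 0, 1, \dots, n$, I would choose $x_k \in T^k\{x\} \setminus T^{k-1}\{x\}$ and argue that any proper subcontinuum of $M$ containing $\set{x, x_1, \dots, x_n}$ would be forced to contain $T^n\{x\}$ and hence $x_{n+1}$, so $M$ itself would be the unique continuum containing this $(n+1)$-point set, contradicting the sharpness of the $n$-point irreducibility hypothesis.  Hence each $T^n\{x\}$ is $T$-closed.  Property (4), that each atom has empty interior in $M$, then follows from a separate argument that any proper $T$-closed subcontinuum of $M$ must be nowhere dense; this is where the interplay between the finished hypothesis and the sharp $n$-point irreducibility is again essential.

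For upper semi-continuity (item (1)), I would verify that the graph $\set{(x,y) \in M \times M : y \in T^n\{x\}}$ is closed in $M \times M$, using iteratively that $T$ preserves closedness and that passage to continua behaves well under limits of subsets in the Hausdorff metric.  For local connectedness of the quotient $Y := M / \mathcal{D}(M)$ (item (3)), I would first observe $Y$ is aposyndetic: each fiber of the quotient map is a $T$-closed atom, maximal with that property, so in $Y$ every singleton is its own $T$-closure, which is precisely the condition for aposyndesis at every point.  Since $Y$ is also finitely irreducible (the monotone image of an $n$-point irreducibility set is irreducible about its image), local connectedness then follows from the classical fact that an aposyndetic finitely irreducible continuum is locally connected.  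By \prettyref{rem:lc_irred}, $Y$ is then a finite tree, and sharpness of the count matches it with exactly $n$ endpoints.

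For uniqueness, given any other partition $\mathcal{E}$ satisfying (1)--(4), the quotient $M/\mathcal{E}$ is likewise a locally connected continuum whose fibers are $T$-closed continua with empty interior; comparing the two quotient maps factor-wise forces each element of $\mathcal{E}$ to equal an element of $\mathcal{D}(M)$.  The step I expect to be the main obstacle is the continuum-preservation lemma from the first paragraph: showing $T$ sends continua to continua in a finished continuum requires using the ``no indecomposable subcontinuum has interior'' hypothesis in a genuinely topological way, handling delicate structure near any hypothetical separation.  Everything afterward reduces to bookkeeping on the iteration count, properties of monotone quotient maps, and standard facts from decomposition theory.
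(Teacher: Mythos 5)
This statement is not proved in the paper at all: it is quoted verbatim as Theorem~1 of Vought \cite{Vought:1974qy} and used as a black box, so there is no in-paper argument to compare yours against. Your outline is broadly consonant with what Vought actually does (work with the set function $T$, show it preserves continua under the ``finished'' hypothesis, iterate to get $T$-closed atoms, pass to an aposyndetic quotient and conclude it is a tree), so as a reconstruction of the strategy it is reasonable. But two of your steps are broken as written, and both are load-bearing.

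First, the stabilization argument. You conclude that if the chain $\set{x}\subsetneq T\set{x}\subsetneq\cdots$ fails to stabilize by step $n$, then $M$ is irreducible about some $(n+1)$-point set, and you call this a contradiction with ``irreducible about $n$ points but no fewer.'' It is not: a continuum irreducible about $n$ points but no fewer is typically also irreducible about many $(n+1)$-point sets (e.g.\ $[0,1]$ is irreducible about $\set{0,1}$ and also about $\set{0,\tfrac12,1}$). The hypothesis forbids irreducibility about \emph{smaller} sets, not larger ones, so this step needs an entirely different mechanism (and the intermediate claim that a proper subcontinuum through $x,x_1,\dots,x_n$ must contain $T^n\set{x}$ is also unjustified). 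Second, your route to item (4) rests on the claim that every proper $T$-closed subcontinuum of a finished continuum is nowhere dense. That is false. In the $\sin\frac1x$ continuum (a finished continuum, irreducible about $2$ points but no fewer), take $A$ to be the limit bar together with the closure of the curve over $(0,s]$: this is a proper subcontinuum, it is $T$-closed (any point of the curve with parameter $>s$ sits in the interior of a compact arc missing $A$), and it has nonempty interior (curve points with parameter in $(0,s)$ are interior to $A$ relative to $M$). What is actually true, and what the theorem asserts, is that the \emph{atoms} $T^n\set{x}$ --- the minimal $T$-closed sets --- have void interior; that requires its own argument and does not follow from the general statement you invoke.
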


\begin{rem}\label{rem:trans_to_maps}
  This can be translated as the existence of a continuous monotone map
  $m:M \rightarrow Y$ where $Y$ is a finite tree with $n$ endpoints
  and point inverses are nowhere dense in $M$.
\end{rem}

For the remainder of the paper, when $X$ is a finished continuum the
symbol $\mathcal D(X)$ will denote the decomposition provided by this
theorem.  For a point $x \in X$, let $D(x)$ be the element of
$\mathcal D(X)$ containing $x$.  The following observation relates the
properties of $T$-closed sets to the decomposition $\mathcal D(X)$.

\begin{lem}\label{lem:closed_saturated}
  Let $X$ be a finished continuum irreducible about $n$ points.  Then
  every $T$-closed set is a union of elements of $\mathcal D(X)$.
\end{lem}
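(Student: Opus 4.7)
The plan is to show that whenever $A$ is $T$-closed and $x \in A$, the decomposition element $D(x) = T^n\{x\}$ is contained in $A$. Since every element of $\mathcal D(X)$ has the form $D(x)$ for some $x$, this would immediately express $A = \bigcup_{x \in A} D(x)$ as a union of elements of $\mathcal D(X)$, which is what the lemma asserts.

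The main ingredient is the monotonicity of $T$ recorded in \prettyref{rem:T_properties}: whenever $B \subset C$, one has $T(B) \subset T(C)$. Starting from $\set{x} \subset A$, I would argue by induction on $k$ that $T^k\set{x} \subset T^k(A)$ for every $k \geq 0$, with the inductive step a direct application of monotonicity. A separate trivial induction using the hypothesis $T(A) = A$ shows that $T^k(A) = A$ for every $k$. Combining these gives $T^k\set{x} \subset A$ for every $k$; specializing to $k = n$ yields $T^n\set{x} = D(x) \subset A$, as needed.

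I do not anticipate a serious obstacle, since the lemma is essentially a formal consequence of the monotonicity of $T$ together with the definition of a $T$-closed set. The only point worth verifying carefully is that the inclusion $T(A) \subset T(B)$ extracted from \prettyref{rem:T_properties} (valid whenever $A \subset B$) does provide monotonicity in precisely the form required by the induction; the remark in fact states exactly this, so no additional work is necessary. Note also that the hypothesis that $X$ is irreducible about $n$ points (rather than finitely irreducible in general) plays no role in the argument itself; it enters only via \prettyref{thm:decomp} to define $\mathcal D(X)$ as the family of $n$-fold $T$-iterates of singletons.
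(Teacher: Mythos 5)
Your proposal is correct and follows essentially the same route as the paper: the paper's proof likewise observes that for $x \in A$ one has $x \in T^n\set{x} \subset T^n(A) = A$ by monotonicity of $T$ and $T$-closedness of $A$, so that $A = \bigcup_{x \in A} D(x)$. Your version merely spells out the two inductions that the paper leaves implicit.
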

\begin{proof}
  Suppose that $A \subset X$ is $T$-closed and let $x \in A$.  By
  \prettyref{rem:T_properties}, $x \in T^n(\set{x}) \subset T^n(A)$,
  which equals $A$ since $A$ is $T$-closed.  But $T^n(\set{x}) \in \mathcal
  D(X)$ and $x \in A$ was arbitrary, so $A = \bigcup_{x \in A}D(x)$.
\end{proof}

\section{The Decomposition and the Dynamics}
\label{sec:invariance}
The goal of this section is to prove the following theorem.
\begin{thm}\label{thm:main_rat}
  Suppose that $R$ is a rational function whose Julia set is a
  finitely irreducible continuum.
  \begin{enumerate}
  \item $J(R)$ contains an indecomposable subcontinuum with non-empty
    interior relative to $J(R)$, or
  \item $J(R)$ is irreducible between two points, and admits a
    monotone map $m:J(R) \rightarrow[0,1]$ such that
    \begin{enumerate}
    \item $m^{-1}(x)$ is nowhere dense for all $x \in [0,1]$, and
    \item $m$ semiconjugates $R|_{J(R)}$ to an open, topologically
      exact, at most $\operatorname{deg}(R)$-to-one map
      $g:[0,1]\rightarrow[0,1]$.
    \end{enumerate}
  \end{enumerate}
\end{thm}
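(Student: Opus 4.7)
My plan is to handle the dichotomy by assuming that conclusion~(1) fails and deriving~(2). Under this assumption $J(R)$ is a finished continuum, so Theorem~\ref{thm:decomp} produces an upper semi-continuous decomposition $\mathcal D(J(R))$ into nowhere dense subcontinua together with a continuous monotone quotient $m: J(R) \to Y$, where $Y$ is a finite tree whose number of endpoints equals the minimal $n$ for which $J(R)$ is irreducible about $n$ points. The goal is to show that $Y$ must be an arc and that $R$ descends through $m$ to the desired map $g$.

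The first substantive step is to prove that $\mathcal D(J(R))$ is invariant under $R$: $R$ carries each element of $\mathcal D(J(R))$ into an element, and every connected component of $R^{-1}(D)$, for $D \in \mathcal D(J(R))$, is itself an element of $\mathcal D(J(R))$. The natural route is to form the candidate decomposition $\mathcal D'$ whose elements are the components of $R^{-1}(D)$, and to verify that it inherits the four characterizing properties of Theorem~\ref{thm:decomp} from $\mathcal D(J(R))$. The continuity, openness, and finite-to-one character of $R|_{J(R)}$ each play a role, with openness essential for preserving the ``empty interior'' clause. Uniqueness in Theorem~\ref{thm:decomp} then forces $\mathcal D' = \mathcal D(J(R))$, yielding invariance and a continuous factor map $g: Y \to Y$ with $g \circ m = m \circ R$.

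With invariance in hand, the required dynamical properties of $g$ follow naturally. Openness of $g$ is inherited from openness of $R|_{J(R)}$ via the quotient map $m$; topological exactness of $g$ is pushed forward along the surjection $m$ from the standard topological exactness of $R|_{J(R)}$ for $\deg(R) \ge 2$; and the bound $|g^{-1}(y)| \le \deg(R)$ follows from $g \circ m = m \circ R$ by a degree comparison, since $m$ is monotone (hence of topological degree one) while $R|_{J(R)}$ has topological degree $\deg(R)$.

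The remaining task, which I expect to be the main obstacle, is to show $Y$ is an arc, equivalently that $n = 2$. The key topological input is that openness of $g$ forces the preimage of every branch point of $Y$ to consist only of branch points: if $x$ were interior to an arc of $Y$ but $g(x) = p$ a branch point of local order $k \ge 3$, then a sufficiently small arc neighborhood $V$ of $x$ would have its $g$-image, for continuity reasons, confined near $p$ to at most two of the $k$ local branches at $p$, contradicting that $g(V)$ is open and contains $p$. Consequently the branch set $B \subset Y$ is backward invariant; equivalently, $g$ maps $Y \setminus B$ into itself, so every forward iterate of the nonempty open set $U = Y \setminus B$ remains inside $Y \setminus B$. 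Topological exactness, however, requires $g^N(U) = Y$ for some $N$, which forces $B = \emptyset$ and hence $Y = [0,1]$. The monotone map $m$ is then the claimed semiconjugacy, nowhere density of $m^{-1}(x)$ is item~(4) of Theorem~\ref{thm:decomp}, and the remaining assertions on $g$ are exactly those established above.
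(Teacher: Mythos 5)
Your overall architecture matches the paper's --- show $R$ respects Vought's decomposition, induce $g$ on the tree quotient, and use openness plus topological exactness of $g$ to kill branch points --- and your endgame is correct. Your arc argument is a mild repackaging of \prettyref{cor:arc}: the paper makes the backward orbit of a branch point dense (by exactness) and observes that openness forces it to consist of branch points, contradicting finiteness; you instead note that $g^{-1}(B)\subset B$ makes $Y\setminus B$ forward invariant, which contradicts exactness of $g$ on the nonempty open set $Y\setminus B$ unless $B=\emptyset$. Same two ingredients, equally valid, and your justification that open finite-to-one maps of trees cannot send a point of order $\le 2$ to a branch point is essentially right (it needs connectivity of the two local half-arcs plus finiteness of fibers, not just continuity). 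The openness, exactness, and $\deg(R)$-to-one bounds for $g$ are argued as in Theorem~\ref{cor:induced_map}.

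The genuine gap is in the invariance step. You propose to form $\mathcal D'=\{\text{components of } R^{-1}(D):D\in\mathcal D(J(R))\}$, verify the four characterizing properties of Theorem~\ref{thm:decomp}, and invoke uniqueness. Three of the four are routine (and your remark that openness of $R|_{J(R)}$ handles the empty-interior clause is correct), but you say nothing concrete about property (3), local connectedness of the quotient $J(R)/\mathcal D'$, and this is exactly where the difficulty lives. The only soft structure available --- that $J(R)/\mathcal D'$ carries a light (or finite-to-one) open map onto the finite tree $Y$ --- does not obviously force local connectedness: a $\sin\frac1x$-type continuum whose free end sits at a critical level projects onto $[-1,1]$ by a light open map, so at minimum you would need a nontrivial theorem about bounded-multiplicity open maps, together with a count of the components of $R^{-1}(D)$, neither of which appears in your plan. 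The paper sidesteps this entirely with the $T$-function machinery: Lemma~\ref{lem:preimage_apo} shows $R^{-1}$ of a $T$-closed set is $T$-closed (using openness of $R|_{J(R)}$), the FitzGerald--Swingle lemma shows components of $T$-closed sets are $T$-closed, and Lemma~\ref{lem:closed_saturated} shows $T$-closed sets are $\mathcal D$-saturated; hence a component $K$ of $R^{-1}(D)$ is saturated with empty interior, so $m(K)$ is a subcontinuum of a finite tree with empty interior, i.e.\ a point, i.e.\ $K\in\mathcal D(J(R))$. You need this chain, or a genuine substitute for it, before the uniqueness clause of Theorem~\ref{thm:decomp} can be applied.
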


The main work is to show that the decomposition $\mathcal{D}(J(R))$,
defined in the previous section, respects the dynamics of $R$ when
$J(R)$ is a finished continuum.  Specifically, we aim to show that the
image under $R$ of any element of $\mathcal D(J(R))$ is again an
element of $\mathcal D(J(R))$.

First we prove two lemmas relating $T$ and $R$.
\begin{lem}\label{lem:preimage_apo}
  Let $R$ be a rational map.  If $A$ is $T$-closed in $J(R)$, then
  $R^{-1}(A)$ is as well.
\end{lem}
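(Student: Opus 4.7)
The plan is to show $T(R^{-1}(A)) \subseteq R^{-1}(A)$ (the reverse inclusion is automatic from the general property $B \subseteq T(B)$), via the contrapositive: for each $x \notin R^{-1}(A)$, I will produce a subcontinuum of $J(R) \setminus R^{-1}(A)$ containing $x$ with nonempty interior in $J(R)$. Since $R(x) \notin A = T(A)$, the definition of $T$ supplies a subcontinuum $C$ of $J(R) \setminus A$ with $R(x) \in C$ and $\interior_{J(R)} C \neq \emptyset$.

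The natural witness is $K$, the connected component of $R^{-1}(C)$ containing $x$. As a component of a closed subset of the compact space $J(R)$, $K$ is a continuum; it contains $x$; and $K \subseteq R^{-1}(C) \subseteq J(R) \setminus R^{-1}(A)$, since $C$ avoids $A$. The task reduces to verifying $\interior_{J(R)} K \neq \emptyset$.

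For this I would exploit that $R$ is an open map on $\rsphere$ and, by complete invariance of $J(R)$, that $R|_{J(R)}$ is an open surjection of $J(R)$ onto itself. Openness gives $\interior_{J(R)} R^{-1}(C) = R^{-1}(\interior_{J(R)} C)$, a nonempty open subset of $J(R)$; furthermore, the classical result that open surjections between continua map components of preimages of continua onto those continua (see Whyburn, \emph{Analytic Topology}) yields $R(K) = C$, so $K$ meets $R^{-1}(\interior_{J(R)} C)$. Choosing $z \in K$ with $R(z) \in \interior_{J(R)} C$ and using that $R$ is a branched cover near $z$, one picks a disk $D \subseteq \rsphere$ around $z$ so small that $R(D) \cap J(R) \subseteq \interior_{J(R)} C$, giving $D \cap J(R)$ as an open-in-$J(R)$ neighborhood of $z$ contained in $R^{-1}(C)$; since preimages under the local branched cover $R|_D$ of connected subsets of $R(D) \cap J(R)$ through $R(z)$ are connected (a property of the local model $u \mapsto u^k$), an open-in-$J(R)$ subset near $z$ ends up inside the single component $K$.

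The main obstacle is this final step: because $J(R)$ need not be locally connected at $z$, the open neighborhood $D \cap J(R)$ could in principle be split among several components of $R^{-1}(C)$, and careful use of the branched-cover structure of $R$ is needed to trap an open-in-$J(R)$ piece inside $K$. Once this is done, $K$ is a subcontinuum of $J(R) \setminus R^{-1}(A)$ containing $x$ with nonempty interior, witnessing $x \notin T(R^{-1}(A))$ and completing the contrapositive.
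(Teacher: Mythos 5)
Your setup is the paper's: argue the contrapositive, pull back a witnessing continuum $C\subset J(R)\setminus A$ through $R(x)$, and take the component $K$ of $R^{-1}(C)$ containing $x$. But the step you yourself flag as ``the main obstacle'' is a genuine gap, and the local branched-cover route you sketch does not close it. The local model only tells you that $(R|_D)^{-1}(S)$ is connected for $S$ a \emph{connected} subset of $R(D)$ through the critical value; the relevant set is $C\cap R(D)$, which for a small disk $D$ may be badly disconnected, so $D\cap R^{-1}(C)$ can split into many pieces lying in different components of $R^{-1}(C)$. Since $J(R)$ need not be locally connected, other components of $R^{-1}(C)$ could a priori accumulate on $K$ throughout $D$, leaving $K\cap D$ with empty interior in $J(R)$ --- nothing in your argument excludes this, and this is exactly the conclusion you need.

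The missing ingredient is global, not local, and you already have it in hand: you cite confluence to get $R(K)=C$, and the same fact shows every component of $R^{-1}(C)$ maps onto $C$, hence meets the fiber $R^{-1}(c)$ for any fixed $c\in C$; since that fiber has at most $\deg(R)$ points, $R^{-1}(C)$ has at most $\deg(R)$ components. Finitely many closed components are each relatively open in $R^{-1}(C)$, so $K=W\cap R^{-1}(C)$ for some open $W\subset J(R)$, and then $W\cap R^{-1}(\interior_{J(R)}C)$ is a nonempty open subset of $J(R)$ contained in $K$ (nonempty because $R(K)=C\supset\interior_{J(R)}C\neq\emptyset$). This is precisely how the paper proceeds, with one further simplification: it chooses the continuum $H$ so that $R(x)\in\interior_{J(R)}H$, whence $R^{-1}(H)$ is already a neighborhood of $x$ and the relatively open component through $x$ is itself a neighborhood of $x$, with no need to hunt for a point of $K$ lying over $\interior_{J(R)}C$.
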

\begin{proof}
  Let $A \subset Y$ be a $T$-closed set, and let $A^{-1}=R^{-1}(A)$.
  Suppose $x \notin A^{-1}$; we will show that $x \notin T(A^{-1})$.
	
  Note that $R(x) \notin A$ since $x \notin T(A^{-1})\supset A^{-1}$.
  Since $A$ is $T$-closed, we have that $R(x) \notin T(A)$.  Let $H
  \subset Y \setminus A$ be a continuum containing $R(x)$ in its
  interior.  Then $R^{-1}(H)$ is a closed set that is also a
  neighborhood of $x$.  Let $H^{-1}$ be the component of $R^{-1}(H)$
  containing $x$.  Note that $R^{-1}(H)$ is the union of finitely many
  continua, so $H^{-1}$ is open in $R^{-1}(H)$, therefore containing a
  neighborhood (in $J(R)$) of $x$.  Also,
  
  \begin{align*}
    H^{-1} \cap A^{-1} &\subset R^{-1}(H) \cap R^{-1}(A) \\
    &= R^{-1}(H \cap A) \\
    &= \emptyset.
  \end{align*}
  Thus $x \notin T(A^{-1})$.  Since $x$ was any point in $X \setminus
  A^{-1}$, we see that $T(A^{-1})=A^{-1}$.
\end{proof}

\begin{thm}\label{thm:elements_to_elements}
  Let $R$ be a rational function.  If the Julia set $J$ is a finished
  continuum, then for each $D \in \mathcal{D}(J(R))$ we have $R(D) \in
  \mathcal{D}(J(R))$.
\end{thm}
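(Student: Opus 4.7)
The plan is to fix $D \in \mathcal{D}(J(R))$, pick any $x \in D$, and let $E$ be the element of $\mathcal{D}(J(R))$ containing $R(x)$. I will then prove the two inclusions $R(D) \subseteq E$ and $E \subseteq R(D)$ separately.

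For the inclusion $R(D) \subseteq E$, I would apply the two lemmas directly. Since $E = T^n\{R(x)\}$ is $T$-closed (as an element of $\mathcal{D}(J(R))$), Lemma~\ref{lem:preimage_apo} gives that $R^{-1}(E)$ is $T$-closed, and Lemma~\ref{lem:closed_saturated} then expresses $R^{-1}(E)$ as a union of elements of $\mathcal{D}(J(R))$. Since $x$ lies in both $D$ and $R^{-1}(E)$, and since elements of the partition $\mathcal{D}(J(R))$ are disjoint, we must have $D \subseteq R^{-1}(E)$, hence $R(D) \subseteq E$.

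For the reverse inclusion, the plan is to invoke the uniqueness assertion of Theorem~\ref{thm:decomp}. Define a partition $\mathcal{D}'$ of $J(R)$ whose elements are the connected components of $R^{-1}(E')$ as $E'$ ranges over $\mathcal{D}(J(R))$. The goal is to verify that $\mathcal{D}'$ satisfies the four conditions listed in Theorem~\ref{thm:decomp}, so that by uniqueness $\mathcal{D}' = \mathcal{D}(J(R))$ and, in particular, $D$ is itself a connected component of $R^{-1}(E)$. The starting observation is that $R|_{J(R)}$ is an open map: if $U = V \cap J(R)$ with $V$ open in $\widehat{\mathbb{C}}$, then complete invariance forces $R(U) = R(V) \cap J(R)$, which is open in $J(R)$ by openness of $R$ on $\widehat{\mathbb{C}}$. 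Openness then yields that $R^{-1}$ of a nowhere dense set is nowhere dense, giving condition (4). Conditions (1)--(3) will follow from compactness together with finiteness of $\deg R$, which caps the number of components of any $R^{-1}(E')$ for $E'$ connected at $\deg R$; the induced map $J(R)/\mathcal{D}' \to J(R)/\mathcal{D}(J(R))$ is then a continuous finite-to-one surjection onto the finite tree $Y$, from which local connectivity of $J(R)/\mathcal{D}'$ follows.

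Finally, with $D$ known to be a component of $R^{-1}(E)$, I would finish as follows. Since $R|_{J(R)}$ is open and $R^{-1}(E)$ is $R$-saturated, the restriction $R|_{R^{-1}(E)} \colon R^{-1}(E) \to E$ is open and surjective. Because $E$ is connected and $R$ has finite degree, $R^{-1}(E)$ has only finitely many (at most $\deg R$) components, each of which is therefore clopen in $R^{-1}(E)$. So $R(D)$ is both open (by openness of the restriction) and closed (by compactness and continuity) in $E$; since $E$ is a continuum and $R(D) \neq \emptyset$, we conclude $R(D) = E$.

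The principal obstacle is the uniqueness step: verifying upper semi-continuity of $\mathcal{D}'$ and local connectivity of $J(R)/\mathcal{D}'$. Both will hinge on finiteness of $\deg R$ and openness of $R|_{J(R)}$, and turning these into a clean check of Vought's four hypotheses is the main technical work. Everything else is a routine combination of the two lemmas with the openness of $R$ on $J(R)$.
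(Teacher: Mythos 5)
Your first inclusion, $R(D)\subseteq E$, is correct and is exactly how the paper's proof begins: Lemma~\ref{lem:preimage_apo} makes $R^{-1}(E)$ a $T$-closed set, Lemma~\ref{lem:closed_saturated} makes it a union of elements of $\mathcal D(J(R))$, and disjointness of the partition forces $D\subseteq R^{-1}(E)$. Your closing step is also sound: once $D$ is known to be a whole component of $R^{-1}(E)$, openness of $R|_{J(R)}$ together with the finiteness of the number of components gives $R(D)=E$ (this is just the confluence of $R|_{J(R)}$, which the paper cites in one word). The gap is in the middle step, which is the actual content of the theorem. You propose to show that $D$ is a full component of $R^{-1}(E)$ by checking that the pulled-back partition $\mathcal D'$ satisfies all four of Vought's conditions and then invoking the uniqueness clause of Theorem~\ref{thm:decomp}; but the two conditions that carry all the weight --- upper semi-continuity of $\mathcal D'$ and local connectivity of $J(R)/\mathcal D'$ --- are precisely the ones you leave as ``the main technical work.'' Moreover, the justification you sketch for local connectivity, namely that $J(R)/\mathcal D'$ admits a continuous finite-to-one surjection onto the finite tree $Y$, is not something from which local connectivity ``follows'' without proof: one has to argue (e.g., via the continuum-of-convergence characterization of local connectedness) that a continuum mapping finite-to-one onto a finite tree cannot contain a nondegenerate limit of pairwise disjoint subcontinua, and even setting up the induced map on $J(R)/\mathcal D'$ as a map of continua already presupposes the upper semi-continuity you have not established. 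So as written the crux is asserted rather than proved.

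The paper's proof avoids constructing a second decomposition altogether, and the ingredient it uses is the one your plan is missing. Let $K$ be the component of $R^{-1}(E)$ containing $x$. By Lemma~2.6 of \cite{FitzGerald:1967rz}, a component of a $T$-closed set is $T$-closed, so $K$ is $T$-closed; by Lemma~\ref{lem:closed_saturated} it is therefore a union of elements of $\mathcal D(J(R))$, i.e., $m$-saturated, so $K=m^{-1}(m(K))$. Since $R|_{J(R)}$ is open and $R(K)=E$ has empty interior, $K$ has empty interior, hence $m(K)$ is a subcontinuum of the finite tree $Y$ with empty interior, hence a point; so $K$ is a single element of $\mathcal D(J(R))$, necessarily $D$, and confluence finishes the proof. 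Your route can probably be completed (upper semi-continuity of $\mathcal D'$ does follow from that of $\mathcal D(J(R))$ and continuity of $R$ by a Hausdorff-limit argument, and the finite-to-one claim is true for trees), but it front-loads the entire difficulty into two unproved assertions, whereas the FitzGerald--Swingle lemma lets you argue one component at a time and bypass both.
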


\begin{proof}
  Let $Y = J(R) / \mathcal D(J(R))$, and let $m:J(R) \rightarrow Y$ be
  the associated quotient map.  Recall from \prettyref{thm:decomp}
  that elements of $\mathcal D(J(R))$ are nowhere dense in $J(R)$ and
  that $Y$ is a finite tree.  For $x \in J(R)$, recall that $D(x)$
  denotes the unique element of $\mathcal D(J(R))$ containing $x$.  We
  must show that $R(D(x)) = D(R(x))$.  Let $K$ be the component of
  $R^{-1}(D(R(x)))$ containing $x$.  Because $R$ is confluent, we see
  that $R(K) = D(R(x))$.  It is then sufficient to show that $D(x) =
  K$.
  
  We see that $K$ has empty interior in $J(R)$, since $R(K) = D(R(x))$
  has empty interior in $J(R)$ and $R|_{J(R)}$ is an open map.  As a
  component of a $T$-closed set, $K$ is $T$-closed \cite[Lemma
  2.6]{FitzGerald:1967rz} and the union of elements of $\mathcal
  D(J(R))$ by \prettyref{lem:closed_saturated}.  Therefore, $K =
  m^{-1}(m(K))$, so $m(K)$ must have empty interior in $Y$.  However,
  $Y$ is a finite tree, so the only subcontinua with empty interior
  are points.  We then conclude that $m(K)$ is a point, and therefore
  $K$ is an element of $\mathcal{D}(J(R))$.
\end{proof}

Now we study the induced map $g:J(R)/\mathcal{D}(J(R)) \rightarrow
J(R)/\mathcal{D}(J(R))$ and use it to draw conclusions about the set
$J(R)$ itself.

\begin{thm}\label{cor:induced_map}
  Suppose that $R$ is a rational map whose Julia set is a finished
  continuum.  Then the monotone map $m:J(R) \rightarrow
  J(R)/\mathcal{D}(J(R))$ is a monotone semiconjugacy of $R|_{J(R)}$
  to a map $g:J(R)/\mathcal{D}(J(R)) \rightarrow
  J(R)/\mathcal{D}(J(R))$ of a finite tree.  The map $g$ is then open,
  topologically exact, and at most $\operatorname{deg}(R)$-to-one.
\end{thm}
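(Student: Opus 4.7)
The plan is to leverage Theorem~\ref{thm:elements_to_elements} to define $g$, then verify the four claimed properties in turn. Since $R$ maps each element of $\mathcal D(J(R))$ to an element of $\mathcal D(J(R))$, the formula $g(m(x)) := m(R(x))$ gives a well-defined function $g: Y \to Y$, where $Y := J(R)/\mathcal D(J(R))$. By construction this makes $m$ a semiconjugacy of $R|_{J(R)}$ to $g$; continuity of $g$ then follows from the universal property of the quotient map $m$, since $m \circ R$ is continuous.

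For openness, the key step is to verify that $R$ preserves $\mathcal D$-saturation. If $V \subset J(R)$ is a union of elements of $\mathcal D(J(R))$ and $y = R(x) \in R(V)$, then $x \in V$ forces $D(x) \subset V$, so $D(y) = R(D(x)) \subset R(V)$ by Theorem~\ref{thm:elements_to_elements}. Given open $U \subset Y$, the set $V := m^{-1}(U)$ is open and $\mathcal D$-saturated, so $R(V)$ is open (using that $R|_{J(R)}$ is open) and $\mathcal D$-saturated; hence $g(U) = m(R(V))$ is open in $Y$, as $m$ is a quotient map.

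Topological exactness of $g$ will follow from that of $R|_{J(R)}$---a classical fact stating that every nonempty relatively open $V \subset J(R)$ satisfies $R^n(V) = J(R)$ for some $n$. Applied with $V := m^{-1}(U)$ for any nonempty open $U \subset Y$, this gives $g^n(U) = m(R^n(V)) = m(J(R)) = Y$.

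For the degree bound, the observation (already implicit in the proof of Theorem~\ref{thm:elements_to_elements}) is that each element of $\mathcal D(J(R))$ is a component of $R^{-1}$ of its image. Consequently, if $g^{-1}(z) = \{z_1,\ldots,z_k\}$ and $D := m^{-1}(z)$, then $m^{-1}(z_1),\ldots,m^{-1}(z_k)$ are $k$ distinct components of $R^{-1}(D)$; since $R$ is a branched covering of degree $\deg(R)$, any regular value in $D$ has $\deg(R)$ preimages distributed across these components, so $k \le \deg(R)$. I expect the main obstacle to be the openness argument, which hinges on the $\mathcal D$-saturation preservation above; the other three properties transfer from $R$ to $g$ by routine quotient-map arguments.
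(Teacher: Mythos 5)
Your proposal is correct and follows essentially the same route as the paper: define $g = m\circ R\circ m^{-1}$ using Theorem~\ref{thm:elements_to_elements} for well-definedness and continuity, get openness from the openness of $R|_{J(R)}$ plus preservation of $\mathcal D$-saturation, get exactness from that of $R|_{J(R)}$, and bound the degree by noting each fiber of $g^{-1}(z)$ corresponds to a component of $R^{-1}(m^{-1}(z))$ mapping onto $m^{-1}(z)$ and hence containing a preimage of a fixed point there. Your explicit verification of the saturation step and the phrasing of the degree bound via components and regular values are only cosmetic departures from the paper's argument.
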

\begin{proof}
  For brevity, set $Y = J(R) / \mathcal{D}(J(R))$.  Notice that $m
  \circ R$ is constant on every set of the form $m^{-1}(y)$ by
  \prettyref{thm:elements_to_elements}.  Therefore, the map $g:Y
  \rightarrow Y$ defined by $g = m \circ R \circ m^{-1}$ is a
  single-valued, continuous map (see \cite[Theorem
  22.2]{Munkres:1975ad}).

  To see that $g$ is open and topologically exact, let $U \subset Y$
  be open.  Then $m^{-1}(U)\subset J(R)$ is an open set, saturated
  with respect to the quotient map $m$.  Since $R|_{J(R)}$ is open,
  $R(m^{-1}(U))\subset J(R)$ is open, and by
  \prettyref{thm:elements_to_elements} this set is also saturated.
  Therefore, $m(R(m^{-1}(U)))=g(U) \subset Y$ is open, so $g$ is an
  open map.  Also, since $m^{-1}(U) \subset J(R)$ is open, there
  exists $n \in \nats$ such that $R^n(m^{-1}(U))=J(R)$.  Thus $m \circ
  R^n \circ m^{-1}(U)=g^n(U) = Y$, so $g$ is topologically exact.
	
  To show that $g$ is at most $\operatorname{deg}(R)$-to-one, let $y
  \in Y$. Then $R^{-1}(m^{-1}(y))$ is the union of at most
  $\operatorname{deg}(R)$ elements of $\mathcal{D}(J(R))$, since each
  element must map onto $m^{-1}(y)$ and hence must contain a preimage
  of $y$.  Therefore, $m(R^{-1}(m^{-1}(y)))=g^{-1}(y)$ has cardinality
  at most $\operatorname{deg}(R)$.
\end{proof}

\begin{cor}\label{cor:arc}
  If $R$ is a rational function and $J(R)$ is a finished continuum,
  then $J(R) / \mathcal D(J(R))$ is an arc, and $J(R)$ is irreducible.
\end{cor}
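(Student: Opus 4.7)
The plan is to show that the finite tree $Y := J(R)/\mathcal{D}(J(R))$ is an arc. Once this is established, the second claim follows immediately: by \prettyref{thm:decomp}, $Y$ has exactly as many endpoints as the irreducibility number of $J(R)$, so $Y$ being an arc (two endpoints) forces $J(R)$ to be irreducible about two points.

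By \prettyref{cor:induced_map}, the induced map $g : Y \to Y$ is open, topologically exact, and at most $\operatorname{deg}(R)$-to-one. A finite tree without branch points is an arc, so it suffices to derive a contradiction from the assumption that $Y$ contains a branch point $v$, that is, a point with $Y \setminus \{v\}$ disconnected into at least three components. The strategy is to pick a non-empty open arc $I \subset Y$ lying in the interior of a single edge (possible since $Y$ has only finitely many vertices), and use topological exactness to find $N$ with $g^N(I) = Y$.

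The map $g^N$ is open as a composition of open maps and is at most $\operatorname{deg}(R)^N$-to-one. Because $I$ is open in $Y$, the restriction $g^N|_I : I \to Y$ is an open, continuous, finite-to-one surjection. Pick $t^* \in I$ with $g^N(t^*) = v$; since $(g^N)^{-1}(v)$ is finite, shrink to an open sub-interval $U \subset I$ whose only preimage of $v$ in $U$ is $t^*$. Then $g^N(U)$ is an open subset of $Y$ containing $v$, so it meets all (at least three) components of $Y \setminus \{v\}$. On the other hand, $U \setminus \{t^*\}$ is a disjoint union of two open intervals, and each is mapped by the continuous $g^N$ into a connected, hence single-component, subset of $Y \setminus \{v\}$. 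Consequently $g^N(U \setminus \{t^*\}) = g^N(U) \setminus \{v\}$ meets at most two components of $Y \setminus \{v\}$, a contradiction.

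The main obstacle I anticipate is the local analysis at $t^*$: one has to use finite-to-oneness to isolate $t^*$ among preimages of $v$, use that $I$ is open in $Y$ so that $g^N|_I$ is open as a map into $Y$, and use openness of $g^N|_I$ at $t^*$ to conclude that $g^N(U)$ actually covers a full tree-neighborhood of $v$. With these ingredients in place, the topological punchline — puncturing an interval leaves two pieces, but any neighborhood of a branch point meets three arms — closes the argument.
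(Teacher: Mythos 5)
Your proof is correct and follows essentially the same route as the paper: both use openness and topological exactness of $g$ to show that a branch point of the finite tree would force a branch point where none can exist, and both deduce irreducibility from the endpoint count in \prettyref{thm:decomp}. The only difference is presentational: the paper asserts that openness makes every preimage of a branch point a branch point (yielding infinitely many branch points by density of $\bigcup_n g^{-n}(x)$), whereas you carry out that local valence analysis explicitly at a single preimage $t^*$ placed in the interior of an edge.
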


\begin{proof}
  Suppose that $J(R) / \mathcal{D}(J(R))$ has a cut point $x$ of order
  at least $3$.  Since $g$ is topologically exact, we have that $B =
  \bigcup_{n \in \nats}g^{-n}(x)$ is dense in
  $J(R)/\mathcal{D}(J(R))$.  Since $g$ is open, each point of $B$ is
  also a branch point of $J(R) / \mathcal{D}(J(R))$.  Therefore, $J(R)
  / \mathcal{D}(J(R))$ has infinitely branch points if it has one.  A
  finite tree does not have infinitely many branch points, so $J(R) /
  \mathcal{D}(J(R))$ is an arc.  That $J(R)$ is irreducible follows
  from \prettyref{thm:decomp} and that the arc is an irreducible
  continuum.
\end{proof}

\begin{rem}\label{rem:nsaw}
  It is known that topologically exact open maps of intervals are
  conjugate to $n$-saw maps, which are of the form
  \[
  f(x)=\begin{cases}
    (n-2i)x & \mbox{if $x \in [2i, 2i+1]$}\\
    (2(i+1)-n)x & \mbox{if $x \in [2i+1, 2i+2]$}.
  \end{cases}
  \]
  Therefore, $g$ is conjugate to a map of this sort.
\end{rem}

Combining the above, we can prove the main theorem of the section.

\begin{proof}[Proof of \prettyref{thm:main_rat}]
  Suppose that $J$ contains no indecomposable subcontinuum with
  interior.  The monotone map $m$ is the quotient map corresponding to
  the decomposition of \prettyref{thm:decomp}.  That $m$ is an arc is
  \prettyref{cor:arc}, and the facts about the induced map $g:I
  \rightarrow I$ are from \prettyref{cor:induced_map}.
\end{proof}

\section{The Polynomial Case}
\label{sec:polynomial-case}

In the case of polynomial Julia sets, we can say more.
\begin{thm}
  Let $P$ be a polynomial, and suppose that $J(P)$ is finitely
  irreducible.  Then either
  \begin{enumerate}
  \item $J$ is an indecomposable continuum, or
  \item $J$ is homeomorphic to an arc.
  \end{enumerate}
\end{thm}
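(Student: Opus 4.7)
My plan is to apply \prettyref{thm:main_rat} to $P$ and treat its two alternatives separately, leveraging the polynomial-specific fact that $J(P) = \partial U_\infty$ bounds the simply connected basin $U_\infty$ of $\infty$.

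In case (1) of \prettyref{thm:main_rat}, $J(P)$ contains an indecomposable subcontinuum $K$ with nonempty interior relative to $J(P)$, and I would argue that $J(P)$ itself must be indecomposable. The main dynamical input is topological exactness of $P|_{J(P)}$, which yields an iterate $P^n$ with $P^n(\operatorname{Int}_{J(P)} K) = J(P)$ and hence $P^n(K) = J(P)$. Assume for contradiction $J(P) = A \cup B$ with $A, B$ proper subcontinua. Since $P^n$ is confluent and finite-to-one on $J(P)$, each of $(P^n)^{-1}(A)$ and $(P^n)^{-1}(B)$ is a finite union of proper subcontinua of $J(P)$. Consequently the covering $K = (K \cap (P^n)^{-1}(A)) \cup (K \cap (P^n)^{-1}(B))$ expresses $K$ as the union of two closed sets, each a finite union of proper subcontinua of the indecomposable continuum $K$ and hence nowhere dense in $K$. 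A Baire-category argument then forbids their union from exhausting $K$, yielding the contradiction.

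In case (2), $J(P)$ is a finished continuum irreducible between two points, and \prettyref{thm:main_rat}(2) supplies a monotone semiconjugacy $m : J(P) \to [0,1]$ with nowhere-dense fibers. I aim to show $J(P)$ is an arc; by \prettyref{rem:lc_irred} it suffices that $J(P)$ be locally connected. Since $J(P)$ bounds the simply connected domain $U_\infty$, the Whyburn theorem cited in Section~\ref{sec:aposyndesis} reduces local connectivity to aposyndesis. For distinct $p, q \in J(P)$ with $m(p) \neq m(q)$, a closed interval $I \subset [0,1]$ containing $m(q)$ in its interior and disjoint from $m(p)$ yields, by the monotonicity of $m$, a subcontinuum $m^{-1}(I)$ of $J(P) \setminus \{p\}$ containing $q$ in its interior, witnessing aposyndesis at $p$ with respect to $q$.

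The remaining case $m(p) = m(q)$ amounts to showing every fiber of $m$ is a singleton; this is the main obstacle. Assuming some fiber $D = m^{-1}(t)$ is nondegenerate, $D$ is a nowhere-dense subcontinuum of $\partial U_\infty$ whose removal from $J(P)$ leaves exactly two complementary open pieces. I would then invoke the Carath\'eodory prime-end structure of $U_\infty$: crosscuts in $U_\infty$ with endpoints accessible from the two complementary sides of $D$ identify $D$ (essentially) with the impression of a prime end of $U_\infty$, and the induced dynamics of $g$ on $[0,1]$, combined with the case-(2) hypothesis that $J(P)$ contains no indecomposable subcontinuum with interior, should force a contradiction. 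This prime-end analysis, where the polynomial hypothesis (via the simple connectivity of $U_\infty$) is essential beyond the Whyburn reduction, is the most delicate step.
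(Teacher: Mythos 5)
Your overall strategy---reduce to Theorem~\ref{thm:main_rat} and then treat the two alternatives separately---matches the paper's, but both halves of your argument have genuine gaps.

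In case (1), the statement you need is precisely the cited result of Childers, Mayer and Rogers: if $J(P)$ contains an indecomposable subcontinuum with nonempty interior relative to $J(P)$, then $J(P)$ is indecomposable. Your Baire-category argument does not establish it. The set $K\cap (P^n)^{-1}(A)$ is a finite union of sets of the form $K\cap C$, where $C$ is a component of $(P^n)^{-1}(A)$; each such $C$ is indeed a proper subcontinuum of $J(P)$, but $K\cap C$ need not be connected, so it is not a ``finite union of proper subcontinua of $K$.'' Its components are proper subcontinua of $K$ (a component equal to $K$ would give $P^n(K)\subset A$, contradicting $P^n(K)=J(P)$), hence each is nowhere dense in $K$; but there may be uncountably many of them, and a closed subset of an indecomposable continuum all of whose components are nowhere dense can still have nonempty interior (consider the closure of a small open subset of the Knaster continuum). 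So the covering of $K$ by these two closed sets yields no contradiction by category alone. The actual proof of this implication in \cite{Childers:2006fk} goes through the prime-end structure of the basin of infinity; the paper simply cites it.

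In case (2), the reduction via Whyburn's theorem to aposyndesis is sound as far as it goes, but it collapses to the single assertion that every fiber $m^{-1}(t)$ is a point---and if you knew that, you would be finished at once, since a continuous bijection from the compact set $J(P)$ onto $[0,1]$ is a homeomorphism; no aposyndesis is needed. That assertion is exactly the hard part, and your proposal leaves it as an unexecuted sketch (``should force a contradiction''). The paper proves local connectivity by an entirely different route: it fills in each fiber to a full continuum $J_t$, shows that $\bigcup_{t} J_t$ is the filled Julia set, proves that $t$ is a critical point of the induced saw map $g$ exactly when $J_t$ contains a critical point of $P$, deduces that $J_0$ and $J_1$ are singletons (an expanding homeomorphism of an invariant full continuum forces degeneracy) into which every critical orbit eventually falls, and concludes that $P$ is postcritically finite, whence $J(P)$ is locally connected by Milnor's theorem. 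Without either completing your prime-end analysis or importing an argument of this kind, the proof is incomplete.
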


\begin{proof}
  Suppose that $J$ is not indecomposable.  Then no indecomposable
  subcontinuum has interior in $J$ \cite[Theorem 1]{Childers:2006fk},
  so $J(P)$ is a finished continuum.  Let $m:J \rightarrow [0,1]$ be
  the monotone map provided by \prettyref{thm:main_rat}, and let $g:I
  \rightarrow I$ be the map to which $P|_{J(P)}$ is conjugate via $m$.
  We will show that $J(P)$ is homeomorphic to an arc by showing it is
  locally connected (see \prettyref{rem:lc_irred}).  This will be
  accomplished by showing that the forward orbits of its critical
  points are finite \cite[Theorem 19.7]{Milnor:2006fr}.

  For $t \in [0,1]$, let $J_t$ denote the union of the continuum
  $m^{-1}(t)$ with its bounded complementary domains.  It is not
  difficult to see that $P(J_t)=J_{g(t)}$, and that the map
  $P|_{J(t)}:J_t \rightarrow J_{g(t)}$ is open for each $t \in I$ (for
  instance by \cite[Lemma 13.13]{Nadler:1992ek}).

  Extend $m:J(P) \rightarrow [0,1]$ to a map $\hat m:\bigcup_{t \in
    [0,1]} J_t \rightarrow [0,1]$ by sending points of $J_t$ to the
  point $t \in [0,1]$.  Each $\hat m^{-1}(t)$ is a non-separating
  plane continuum, so the Vietoris-Begle theorem implies that $\hat m$
  induces an isomorphism between the \v{C}ech cohomologies of
  $\bigcup_{t \in [0,1]} J_t$ and $[0,1]$.  Therefore, $\bigcup_{t \in
    [0,1]}J_t$ is a non-separating plane continuum containing $J(P)$,
  so $\bigcup_{t \in [0,1]} J_t$ equals the filled Julia set.  Since
  $J(P)$ is connected, each critical point is contained in the filled
  Julia set \cite[9.5]{Milnor:2006fr} and therefore in some $J_t$.

  We now show that a point $t \in [0,1]$ is a critical point of $g$ if
  and only if $J_t$ contains a critical point of $P$.  Suppose that a
  fiber $J_t$ does not contain any critical point of $P$.  Then there
  is a (saturated) neighborhood $U$ of $J_t$ on which $P$ is a
  homeomorphism.  In particular, no two fibers contained in $U$ have
  the same image, so $g$ is a homeomorphism on $m(U)$.

  On the other hand, suppose that $t$ is not a critical point of $g$.
  Then there is a (saturated) neighborhood $U$ of $J_t$ such that no
  two fibers contained in $U$ have the same image.  If any fiber
  $J_{t_0}$ does not map by a homeomorphism onto its image, then
  $J_{t_0}$ must contain a critical point since it and its image are
  full continua.  Let $V \subset U$ be a disk about $J_t$ such that
  $R|_{V \setminus J_t}$ is a covering map onto its image.  Then a
  fiber which does not contain a critical point intersects $V$, and
  $R$ is one-to-one in a neighborhood of it.  Therefore, $R|_{V}$ is a
  homeomorphism, and $J_t$ cannot contain a critical point.

  If one takes a saturated open set $V \subset U$ about $J_t$ such
  that $V \setminus J_{t_0}$ does not contain the preimage of any
  critical value, we see that $R|_V$ is a homeomorphism, so $J_t$
  contains no critical point of $R$.

  Notice that $J_0 \cup J_1$ is a forward-invariant set.  Suppose
  without loss of generality that $P(J_0) = J_0$.  Note that $J_0$
  cannot contain a critical point of $P$, since $0$ cannot be a
  critical point of $g$.  Therefore, $P|_{J_0}$ is a forward expanding
  homeomorphism, which implies that $J_0$ must be a point \cite[Lemma
  18.8]{Milnor:2006fr}.  Then $J_1$ must also be a point, either
  because it is fixed or because it maps to $J_0$.  Because every
  critical point of $g$ must map into $\set{0,1}$, every critical
  point of $R$ must map into $J_0 \cup J_1$ and hence has a finite
  orbit.  This implies that $J(P)$ is locally connected, and therefore
  an arc.
\end{proof}

\section{Further Work}
The main novelty in this work is the use of what might be called a
\emph{locally connected model} of the Julia set to draw conclusions
about its topology and the dynamics of its rational map.  There is
more structure on polynomial Julia sets than there are on rational
Julia sets, so it is to be expected that more can be concluded in the
polynomial case.  Locally connected models for polynomial Julia sets
have been studied further in joint work with Alexander Blokh and Lex
Oversteegen in \cite{Blokh:2008il}, where we characterized the
\emph{finest locally connected model} for the action of a polynomial
on its Julia set.

Vought's decomposition is rather special, and only applies to finitely
irreducible continua.  It is an example of a broader notion called a
\emph{core decomposition}.  A decomposition is core with respect to a
property P if it has property P and it refines all decompositions with
property P.  Therefore, Vought's decomposition for finished continua
is core with respect to the property that the quotient space is
locally connected.

Generally speaking, there is no core decomposition of an arbitrary
continuum with locally connected quotient.  However, there is always a
core decomposition with an aposyndetic
quotient \cite{FitzGerald:1967rz}, of which Vought's decomposition is
a special case. Such a model may serve in the stead of a locally
connected model where locally connected models are unavailable.
However, the utility is limited when the continuum in question
contains an indecomposable subcontinuum with interior, as it must be
absorbed into an element of any decomposition to a nice space.  There
are several pertinent questions.

\begin{ques}
  Does there exist a rational function whose Julia set contains an
  indecomposable continuum with interior?
\end{ques}
\begin{ques}
  Does there exist a rational function whose Julia set does not have a
  finest locally connected model?
\end{ques}
\begin{ques}
  Let $R$ be a rational function with connected Julia set.  Is the
  finest decomposition to an aposyndetic continuum invariant with
  respect to $R$?
\end{ques}
\begin{ques}
  For what useful topological properties $P$ does there exist a finest
  decomposition of every Julia set $J(R)$ satisfying $P$?  Is the
  decomposition dynamic?  Which of these is the appropriate
  analog for the finest locally connected model?
\end{ques}


\bibliographystyle{annotate}

\end{document}